\documentclass[a4paper]{svproc}
\usepackage{url}
\usepackage{amsmath}
\usepackage{amssymb}

\usepackage{graphicx}
\usepackage{subcaption}
\usepackage{wrapfig}

\begin{document}
\mainmatter              % start of a contribution
\title{Implementation of Time-Varying Controllers for a Nonholonomic Mobile Robot: \\ Experimental Studies}
\titlerunning{Implementation of Time-Varying Controllers for a Mobile Robot}  % abbreviated title (for running head)
%                                     also used for the TOC unless
%                                     \toctitle is used
%
\author{Alexander Zuyev\inst{1,3} \and Victoria Grushkovskaya\inst{2,3} \and
Sebastian Eisner\inst{2}
\authorrunning{Alexander Zuyev et al.} % abbreviated author list (for running head)
%
%%%% list of authors for the TOC (use if author list has to be modified)
\tocauthor{Alexander Zuyev, Victoria Grushkovskaya, and Sebastian Eisner}
\institute{%Princeton University, Princeton NJ 08544, USA,\\
%\email{I.Ekeland@princeton.edu},\\ WWW home page:
%\texttt{http://users/\homedir iekeland/web/welcome.html}
%\and
Max Planck Institute for Dynamics of Complex Technical Systems\\ Sandtorstra{\ss}e 1, 39106 Magdeburg, Germany
\and
Department of Mathematics, University of Klagenfurt\\ Universit\"atstra{\ss}e 65--67, 9020 Klagenfurt, Austria
\and
Inst. of Applied Mathematics \& Mechanics, National Acad. of Sciences of Ukraine
}}

\maketitle              % typeset the title of the contribution

\begin{abstract}
We consider a kinematic model of a wheeled mobile robot controlled by translational and angular velocities. For this class of nonholonomic systems, a family of time-varying feedback controllers was proposed in our previous works using gradient flow approximation techniques. In the present study, these controllers are implemented on a TurtleBot3 Burger (TB3) mobile robot to provide experimental validation of the stabilization problem with oscillating input signals. In addition, the admissibility problem of a gradient flow is investigated to justify the construction of a Lyapunov function candidate. The presented experimental results demonstrate the possibility of stabilizing the reference position of the robot using feedback controls with practically acceptable parameters.
\keywords{nonholonomic system, stabilization, Lyapunov function, time-varying feedback, sampling, mobile robot}
\end{abstract}
\section{Introduction}
Control design problems for nonholonomic systems have attracted considerable interest from analytical~\cite{Bloch15,ZuSIAM,pazderski2017waypoint,gao2024fas,zhang2026prescribed}, computationally-oriented~\cite{mondal2020stability,zhang2022new,wu2025evolutionary,ebrahimi2026impact}, and data-driven~\cite{wang2023formation,miao2025study} perspectives. A unified approach to solving the stabilization, motion planning, and obstacle avoidance problems was proposed in~\cite{GZ23}, based on approximating the gradient flow of a certain potential function. Such a potential function plays the role of a Lyapunov function candidate in the stabilization problem.
Although it was proven that this strategy guarantees convergence of trajectories to the steady state for any positive-definite Lyapunov function candidate satisfying additional regularity conditions, the problem of optimizing the transient behavior remains open. In particular, numerical simulations show chattering phenomena when sampling-based control strategies are implemented (cf.~\cite{GZ18}). This chattering arises from the oscillatory nature of these time-varying controllers. In a realistic setup with physical limitations, the input signals vary within velocity constraints, causing considerable chattering around non-admissible directions. As a result, the control system is forced to switch repeatedly, leading to zigzagging behavior.

In this work, we consider a variational formulation aimed at minimizing chattering through a subsequent approximation of gradient-based control techniques. The proposed approach is based on measuring the admissibility of a given gradient flow generated by a potential function when applied to a nonholonomic system over a specified domain of the state space. The admissibility measure of the resulting set of trajectories, or phase portrait, within this domain is characterized in terms of an integral measure that accounts for the magnitude of the gradient of the generating potential function.

Note that the convergence proofs previously presented in~\cite{GZ23} require a certain design parameter $\varepsilon>0$ to be sufficiently small, which leads to high frequencies of order $1/\varepsilon$ in the oscillatory components of the controllers. However, implementing such controllers in real nonholonomic systems requires constraining the frequency within a given bandwidth while simultaneously satisfying physical control constraints. The present paper therefore aims to provide a proof of concept for the practical applicability of oscillatory controllers that account for acceptable ranges of frequency parameters and control gains.

\section{Stabilizing Controls for the Unicycle Kinematics}
In this paper, we consider the well-known kinematic model of a unicycle rolling on the $(x_1,x_2)$-plane, described by the following system  (see, e.g.,~\cite{Bloch15}):
\begin{equation}\label{uni}
\dot x = u_1 f_1(x) + u_2 f_2(x),\quad x=(x_1,x_2,x_3)^\top \in {\mathbb R}^3,\; u=(u_1,u_2)^\top \in{\mathbb R}^2,
\end{equation}
where the vector fields $f_1,f_2: {\mathbb R}^3\to {\mathbb R}^3$ are defined as $f_1(x)=\big(\cos x_3,\sin x_3,0\big)^\top$, $f_2(x)=\big(0,0,1\big)^\top$.
%$$
%f_1(x) = \begin{pmatrix}\cos x_3 \\ \sin x_3 \\ 0 \end{pmatrix},\;
%f_2(x) = \begin{pmatrix}0 \\ 0 \\ 1 \end{pmatrix}.
%$$
Here, $(x_1,x_2)$ are the coordinates of the contact point of the wheel,
and $x_3$ is the angle between the plane of the wheel and the $x_1$-axis.
The controls $u_1$ and $u_2$ describe the translational and angular velocities of the unicycle, respectively.
To simplify the presentation, all variables of this control system are assumed to be dimensionless.
For the purpose of control design, we introduce the matrix ${\cal F}(x)$ composed of the vector fields $f_1(x)$, $f_2(x)$ and their Lie bracket
$[f_1,f_2](x)=\frac {\partial f_2(x)}{\partial x} f_1(x) - \frac {\partial f_1(x)}{\partial x} f_2(x)$, and compute ${\cal F}^{-1}(x)$:
{\small
\begin{equation}\label{Fmatrix}
{\cal F}(x) = (f_1,f_2,[f_1,f_2])(x) = \begin{pmatrix} \cos x_3 & 0 & \sin x_3 \\ \sin x_3 & 0 & - \cos x_3 \\ 0 & 1 & 0 \end{pmatrix},
{\cal F}^{-1}(x) = \begin{pmatrix} \cos x_3 & \sin x_3 & 0 \\ 0 & 0 & 1 \\ \sin x_3 & -\cos x_3 & 0 \end{pmatrix}.
\end{equation}}
The nonsingularity of ${\cal F}(x)$ implies that system~\eqref{uni} satisfies the Lie Algebra Rank Condition (LARC):
$
{\rm span} (f_1(x),f_2(x),[f_1,f_2](x)) = {\mathbb R}^3$ {for all} $x\in{\mathbb R}^3$.
Hence, by the Chow--Rashevsky theorem, the driftless control-affine system~\eqref{uni} is completely controllable in~${\mathbb R}^3$.

For a given real-valued potential function $V\in C^2({\mathbb R}^3)$, the approximate motion of system~\eqref{uni} in the direction of the negative gradient of $V(x)$
is generated by control functions $u^\omega(x,t)=(u_1^\omega(x, t),u_2^\omega(x, t))^\top$ of the following form~\cite{GZ23}:
\begin{equation}\label{velControls}
\begin{aligned}
& u_1^\omega(x, t) = a_1(x) + k_1\sqrt{\omega|a_{12}(x)|}\,\mathrm{sign}(a_{12}(x))\cos(\omega t), \\
& u_2^\omega(x, t) = a_2(x) + k_2\sqrt{\omega|a_{12}(x)|}\sin(\omega t),
\end{aligned}
\end{equation}
where  $\omega{ =}\frac{2\pi}{\varepsilon} $ is the control frequency with sufficiently small $\varepsilon>0$, the coefficients $k_1$, $k_2$ satisfy $k_1 k_2{ = }4$,
and the  functions $a_1(x)$, $a_2(x)$, $a_{12}(x)$ are defined as
\begin{equation}
    \label{a_formulas}
a(x)=\left(
a_1(x),
a_2(x),
a_{12}(x)
\right)^\top
=-\gamma \mathcal F^{-1}(x)\nabla V(x)^\top,
\end{equation}
where $\gamma>0$ is a gain parameter, $\mathcal F^{-1}(x)$ is given by~\eqref{Fmatrix}, and $\nabla V (x)=\left(\frac{\partial V(x)}{\partial x_1},\frac{\partial V(x)}{\partial x_2},\frac{\partial V(x)}{\partial x_3}\right)$ denotes the gradient of $V(x)$.

In~\cite{GZ23}, the particular choice $k_1 = k_2 = 2$ was adopted. Here, we introduce distinct coefficients to provide more flexibility in tuning the control parameters. In particular, as will be shown in Section~4, an appropriate selection of $k_1$ and $k_2$ allows adjustment of the amplitudes of the oscillatory inputs so that the translational and angular velocities remain within the admissible control bounds.
The formulas for the control components in~\eqref{a_formulas} can be written explicitly using~\eqref{Fmatrix}:% as follows:
\begin{equation}\label{aPot}
\begin{aligned}
& a_1(x) = -\gamma\left(\frac{\partial V(x)}{\partial x_1}\cos x_3 +\frac{\partial V(x)}{\partial x_2}\sin x_3\right), \,a_2(x) = -\gamma \frac{\partial V(x)}{\partial x_3} , \\
& a_{12}(x) = -\gamma\left(\frac{\partial V(x)}{\partial x_1}\sin x_3 - \frac{\partial V(x)}{\partial x_2}\cos x_3\right).
\end{aligned}
\end{equation}
As discussed in~\cite{ZG_CDC25}, there are two approaches to implementing the controls~\eqref{velControls}.
The first approach is based on the sampling concept and assumes that the control amplitude vector $a(x)$ is updated at each time instant $t_j=\varepsilon j$ for $j=0,1, ...$ . In this case, the solutions of the resulting closed-loop system are treated according to the {\em $\pi_\varepsilon$-definition}~\cite{ZuSIAM,GZ23} with the control:
\begin{equation}
    \label{u_sampling}
u = u^\omega(x(t_j),t)\quad \text{for}\;\; t\in[j\varepsilon,(j+1)\varepsilon), \;\;j=0,1,\dots \; .
\end{equation}
For a class of positive definite functions $V$,
convergence of the $\pi_\varepsilon$-solutions to the zero equilibrium of systems of the form~\eqref{uni} has been proved in~\cite{ZuSIAM,GZ23} under certain technical assumptions.

The second approach assumes that the control amplitudes evolve continuously in time, that is,
\begin{equation}
    \label{u_cont}
u =u^\omega(x(t),t),\quad t\ge 0,
\end{equation}
for system~\eqref{uni}.
In the sequel, we will refer to the solution definitions corresponding to~\eqref{u_sampling} and~\eqref{u_cont} as the {\em ``sampling''} and {\em ``continuous-time''} approaches, respectively.
In Section~4, both approaches will be implemented experimentally.

\section{Justification for the Lyapunov Function Candidate}

The method of gradient flow approximation, developed in~\cite{GZ23}, establishes the solvability of the stabilization problem for a broad class of nonholonomic systems,
including the unicycle model~\eqref{uni}, with any sufficiently regular positive definite potential function.
However, the problem of the optimal choice of such a potential function has not been addressed so far.
To formulate this type of problem, we assume that the state $x$ and the control $u$ of system~\eqref{uni} are constrained to closed domains $X\subset {\mathbb R}^3$ and $U\subset {\mathbb R}^2$, respectively, and that $(x,u)=(0,0)$ is an interior point of $D\times U$.

For a function $V:X\to \mathbb R$ of class $C^2$, consider the gradient flow system
\begin{equation}\label{Gflow}
\dot {\bar x}(t) = - \nabla V(\bar x(t))^\top,\quad x(t)\in X,
\end{equation}
where the transpose on the right-hand side is taken under the convention that $\nabla V(\bar x)$ is a row vector.
System~\eqref{Gflow} possesses (local) existence and uniqueness of solutions to the Cauchy problem due to the assumption $V\in C^2(X;{\mathbb R})$.
We call the gradient flow of $V$ {\em admissible for system~\eqref{uni} in $X$} if, for each solution $\bar x(t)\in X$ of~\eqref{Gflow} defined on $t\in [0,+\infty)$,
there exists a control $\bar u:[0,+\infty)\to U$ such that $x=\bar x(t)$ satisfies~\eqref{uni} with $u=\bar u(t)$ for all $t\in [0,+\infty)$.
We then introduce the following ``measure of admissibility'' of the gradient flow of $V$ for system~\eqref{uni} in $X$ with controls in $U$:
\begin{equation}\label{J_X}
J_X[V] = \frac{1}{\mu (X)} \int_X \frac{\inf_{u\in U}\|u_1 f_1(x) + u_2 f_2(x) + \nabla V(x) \|^q}{\|\nabla V(x)\|^q}dx.
\end{equation}
Here, $\|\cdot \|$ denotes the Euclidean norm in $\mathbb R^n$, $\mu (X)$ is the Lebesgue measure of $X$, and $q$ is a positive parameter.
Clearly, if the gradient flow of $V$ is admissible for system~\eqref{uni} and $\nabla V(x)\neq 0$ for almost all $x\in X$,
then the integral in~\eqref{J_X} is well-defined and $J_X[V]=0$.
Hence, the minimization of~$J_X$ within some class of potential functions can be considered as a receipt for improving the performance of a gradient-based control algorithm to ensure better tracking properties of the reference dynamics~\eqref{Gflow} by the closed-loop trajectories.

The numerator in the integrand of~\eqref{J_X} is computed for system~\eqref{uni} by the following technical lemma.
\begin{lemma}\label{lemma_rho}
For $x,p\in {\mathbb R}^3$, define
\begin{equation}\label{rho_def}
\rho(x,p) :=\inf_{u\in {\mathbb R}^2} \|u_1 f_1(x) + u_2 f_2(x) + p\|.
\end{equation}
Then
\begin{equation}\label{rho_form}
\rho(x,p) = |p_1\sin x_3 - p_2 \cos x_3|\quad \text{for all}\;\; x,p\in {\mathbb R}^3.
\end{equation}
\end{lemma}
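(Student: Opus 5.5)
The plan is to read $\rho(x,p)$ as the Euclidean distance from $-p$ to the linear subspace ${\rm span}\{f_1(x),f_2(x)\}\subset\mathbb R^3$, and to compute it by orthogonal projection. The key observation, taken from \eqref{Fmatrix}, is that the three columns of ${\cal F}(x)$ form an orthonormal basis of $\mathbb R^3$ for every $x$:
\[
f_1(x)=(\cos x_3,\sin x_3,0)^\top,\quad f_2(x)=(0,0,1)^\top,\quad [f_1,f_2](x)=(\sin x_3,-\cos x_3,0)^\top .
\]
Indeed, each has unit norm and they are pairwise orthogonal. Equivalently, ${\cal F}(x)$ is an orthogonal matrix, consistent with ${\cal F}^{-1}(x)={\cal F}(x)^\top$ in \eqref{Fmatrix}. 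Hence $[f_1,f_2](x)$ is a unit normal to the plane spanned by $f_1(x),f_2(x)$.

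Next, expand $p$ in this basis:
\[
p=\langle p,f_1\rangle f_1+\langle p,f_2\rangle f_2+\langle p,[f_1,f_2]\rangle [f_1,f_2],
\]
which is exactly the content of the coordinates ${\cal F}^{-1}(x)p$. Then
\[
u_1f_1+u_2f_2+p=(u_1+\langle p,f_1\rangle)f_1+(u_2+\langle p,f_2\rangle)f_2+\langle p,[f_1,f_2]\rangle[f_1,f_2].
\]
By Pythagoras,
\[
\|u_1f_1+u_2f_2+p\|^2=(u_1+\langle p,f_1\rangle)^2+(u_2+\langle p,f_2\rangle)^2+\langle p,[f_1,f_2]\rangle^2 .
\]
This is bounded below by $\langle p,[f_1,f_2]\rangle^2$ for every $u\in\mathbb R^2$. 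The bound is attained at $u_1=-\langle p,f_1\rangle=-(p_1\cos x_3+p_2\sin x_3)$ and $u_2=-p_3$, so the infimum is a minimum. Therefore
\[
\rho(x,p)=|\langle p,[f_1,f_2](x)\rangle|=|p_1\sin x_3-p_2\cos x_3|,
\]
which is \eqref{rho_form}.

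The only step needing care is checking orthonormality of the columns of ${\cal F}(x)$. This is a direct trigonometric computation using $\cos^2x_3+\sin^2x_3=1$, so I expect no real obstacle. The argument holds uniformly in $x$ and $p$, and there are no degenerate cases because $\{f_1,f_2,[f_1,f_2]\}$ spans $\mathbb R^3$ at every point.
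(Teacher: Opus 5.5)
Your proof is correct and follows essentially the same route as the paper: both read $\rho(x,p)$ as the length of the component of $p$ orthogonal to ${\rm span}\{f_1(x),f_2(x)\}$ and compute it by pairing $p$ with a unit normal to that plane. The paper gets the normal as the cross product $f_1(x)\times f_2(x)=(\sin x_3,-\cos x_3,0)^\top$, whereas you notice it coincides with $[f_1,f_2](x)$ because ${\cal F}(x)$ is orthogonal, and you also write out the Pythagoras step and the attainment of the infimum, which the paper takes for granted.
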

\begin{proof}
The function $\rho(x,p)$, defined by~\eqref{rho_def}, is the length of the projection of the vector $-p\in{\mathbb R}^3$ onto the subspace orthogonal to ${\rm span}\{f_1(x),f_2(x)\}$. Thus,
\begin{equation}\label{cross_prod}
\rho(x,p) = \frac{|\left<p,f_1(x) \times f_2(x)\right>|}{\|f_1(x)\times f_2(x)\|},
\end{equation}
where $\left<\cdot,\cdot\right>$ and $f_1(x) \times f_2(x)$ denote the inner product and the cross product of vectors in ${\mathbb R}^3$, respectively.
Straightforward computations show that
$
f_1(x) \times f_2(x) = (\sin x_3,-\cos x_3,0)^\top,\; \|f_1(x)\times f_2(x)\|=1,
$
so~\eqref{rho_form} follows from~\eqref{cross_prod}.
\end{proof}

A natural way of defining an optimal Lyapunov function candidate for system~\eqref{uni} in $X$ can be as follows: {\em find a function $V\in C^2(X;{\mathbb R})$ that minimizes the cost~\eqref{J_X} under the constraint that $V$ is positive definite in $X$}.
This optimization problem is highly nonlinear and nonconvex, and it poses significant analytical and computational challenges.
In particular, the set of positive definite functions is nonconvex in any reasonable function space,
and the functional~\eqref{J_X} exhibits singular behavior at critical points of $V$
due to the presence of $\nabla V$.
Note that, in the case $U={\mathbb R}^2$, the functional $J_X[V]$ is invariant under multiplication of $V$ by a nonzero constant.
Therefore, a solution to the above optimization problem (if it exists) is not unique without additional constraints.
A comprehensive analysis of this general problem is beyond the scope of the present paper.
Instead, we estimate $J_X[V]$ using a selected family of positive definite quadratic forms.
%within some positive definite quadratic forms.

\begin{wraptable}{r}{0.35\textwidth} % r - справа, l - слева
  \centering
        \begin{tabular}{|c|c|c|c|} \hline
             $c_1$             & $c_2$ & $c_3$     & $J_X[V]$     \\ \hline
            1      & 1  & 1  & 0.3333  \\
            {\bf 2}       & {\bf 1} & {\bf 1} & {\bf 0.3056} \\
            0.5       & 1  & 1  & 0.3658  \\
            1       & 2 & 1  & 0.4716  \\
            {\bf 1}       & {\bf 0.5} & {\bf 1} & {\bf 0.2123}  \\
             {\bf 1}     & {\bf 1}  & {\bf 2}  & {\bf 0.2228}  \\
              1      & 1  & 0.5 & 0.4219 \\ \hline
        \end{tabular}
%        \vspace{-1ex}
            \caption{Values of $J_X[V]$.}\label{tab:V}
\end{wraptable}

To estimate possible improvements of the admissibility measure $J_X[V]$ in comparison with the standard Lyapunov function candidate $V_1(x)=\|x\|^2$,
we assign different scalings to the three state variables of system~\eqref{uni}. Thus, we compute the functional $J_X[V]$ for the quadratic form $V(x)=c_1 x_1^2+c_2 x_2^2+c_3 x_3^2$ under different choices of coefficients $c_i>0$.
The computational results are summarized in Table~\ref{tab:V} for the case $X= [-1,1]^3$, $U={\mathbb R}^2$, and $q=2$.
In Table~\ref{tab:V}, the rows corresponding to parameters $(c_1,c_2,c_3)$ for which $V$ ensures better performance
than $V_1$, in the sense that $J_X[V]<J_X[V_1]\approx 0.3333$, are highlighted in bold.
The data indicate that performance improves when $c_1$ and $c_3$  are increased and $c_2$  is decreased relative to
the initial choice $c_1=c_2=c_3=1$. This observation motivates introducing the following Lyapunov function candidate for system~\eqref{uni}:
\vskip-2ex
\begin{equation}\label{V_alpha}
V_\alpha(x) = \alpha (x_1^2 +x_3^2)+ \frac{x_2^2}{\alpha},\quad \alpha>1,
\end{equation}
%\vskip-2ex
where $\alpha$ is a parameter.
In particular, computing the cost~\eqref{J_X} for $V_\alpha$ with $\alpha=2,4,10$, we obtain:
$
J_X[V_2]\approx 0.1403,\;
J_X[V_4]\approx 0.0962,\;
J_X[V_{10}]\approx 0.0906
$.

The obtained sequence of decreasing cost values justifies the use of the Lyapunov function candidates~\eqref{V_alpha} for further experiments.
It should be emphasized that quadratic forms cannot serve as control-Lyapunov functions for systems of the form~\eqref{uni} due to Brockett's necessary stabilizability condition~\cite{Bro83,Zu99}. Indeed, Brockett's condition and Artstein's theorem~\cite{Artstein}  imply that the considered class of nonholonomic systems does not admit any continuously differentiable control-Lyapunov function.

\section{Experimental Implementation}
\vskip-2ex
In this section, we present the experimental results obtained by implementing the proposed theoretical framework on a real robotic platform. We evaluate two Lyapunov functions: the classical squared norm and the quadratic form~\eqref{V_alpha}. In addition, we compare two  control approaches~\eqref{u_sampling} and~\eqref{u_cont}.

\subsection{Experimental Setup}
\vskip-1ex
\textbf{Robotic Platform.} The experiments were conducted using a {\em TurtleBot3 Burger (TB3)} – an affordable differential-drive mobile robot (see Figure~\ref{fig:TB3}, left)~\cite{robotis_TB3_overview}.
The robot's control and communication were handled by its onboard Raspberry Pi 4B, running Ubuntu Server 22.04.5 LTS (64-bit) and {\em ROS 2 Humble}. Mechanically, the TB3 features two off-centered, independently actuated wheels and a caster ball; thus, the robot is not a perfect unicycle.
However, a differential-drive robot can be modeled as a unicycle by mapping the velocities of the left and right wheels, $v_l$ and $v_r$
(linear velocities at the wheel edge), to the unicycle velocities  $u_1$ and $u_2$ introduced in~\eqref{uni}.
Let $d$ be the distance between the two wheels of the differential drive robot, and assume that the robot's center of rotation is at the midpoint between the wheels. Then, the unicycle's translational and angular velocities, respectively $u_1$ and $u_2$, are given by $
u_1 = \frac{v_r + v_l}{2}, \quad
u_2 = \frac{v_r - v_l}{d}.
$
The inverse of this transformation is handled automatically by the \texttt{diff\_drive\_controller} of ROS~2. It accepts unicycle-style \texttt{Twist} commands on \texttt{/cmd\_vel} and computes the corresponding wheel velocities.
The robot's motion is constrained by its kinematics with the maximum translational velocity of 0.22 m/s and the maximum angular velocity of 2.84 rad/s. No onboard sensors were used for state estimation or control, but four reflective markers were mounted for the motion capture system. \\
In practice, the TurtleBot3 Burger exhibits limitations beyond velocity constraints, including motor dead zones and nonlinearities. Although no encoder or motor velocity feedback was used to compensate for these effects, the experimental results show that the proposed controller still drives the system close to the desired state in a stable manner. Incorporating additional feedback into the control law could further improve the performance in more demanding scenarios and is left for future work.

\textbf{Motion Capture System.}
Robot pose information is obtained using an {\em OptiTrack} motion capture system, consisting of eight infrared cameras and the {\em Motive} tracking software. As previously stated, four reflective markers are attached to the top of the TB3, allowing the estimation of its $x$- and $y$-position as well as its yaw orientation.
The Motive software typically computes the center point of these markers and uses it as a reference pose.
However, this point
does not align precisely with the robot’s pivot point. To ensure accurate control,
the reference pose is adjusted manually to match the true center of rotation (see Figure~\ref{fig:TB3}, right).
We use a dedicated wireless access point to connect to the OptiTrack system via LAN, ensuring that the TB3 and a remote laptop used for robot control can communicate over a known, low-latency wireless network. \\
The robot moves on a standard office carpet floor, which minimizes slip. While the performance may vary on other surfaces, the conducted experiments demonstrate the feasibility of the controller. \\
Another important note is that the experiments rely on the OptiTrack system for high-precision position feedback. In environments without such a system, odometry-based approaches (i.e., relying only on IMU sensors and motor encoders) would be significantly more challenging.

\begin{figure*}[t]
    \centering
    \includegraphics[width=0.35\linewidth]{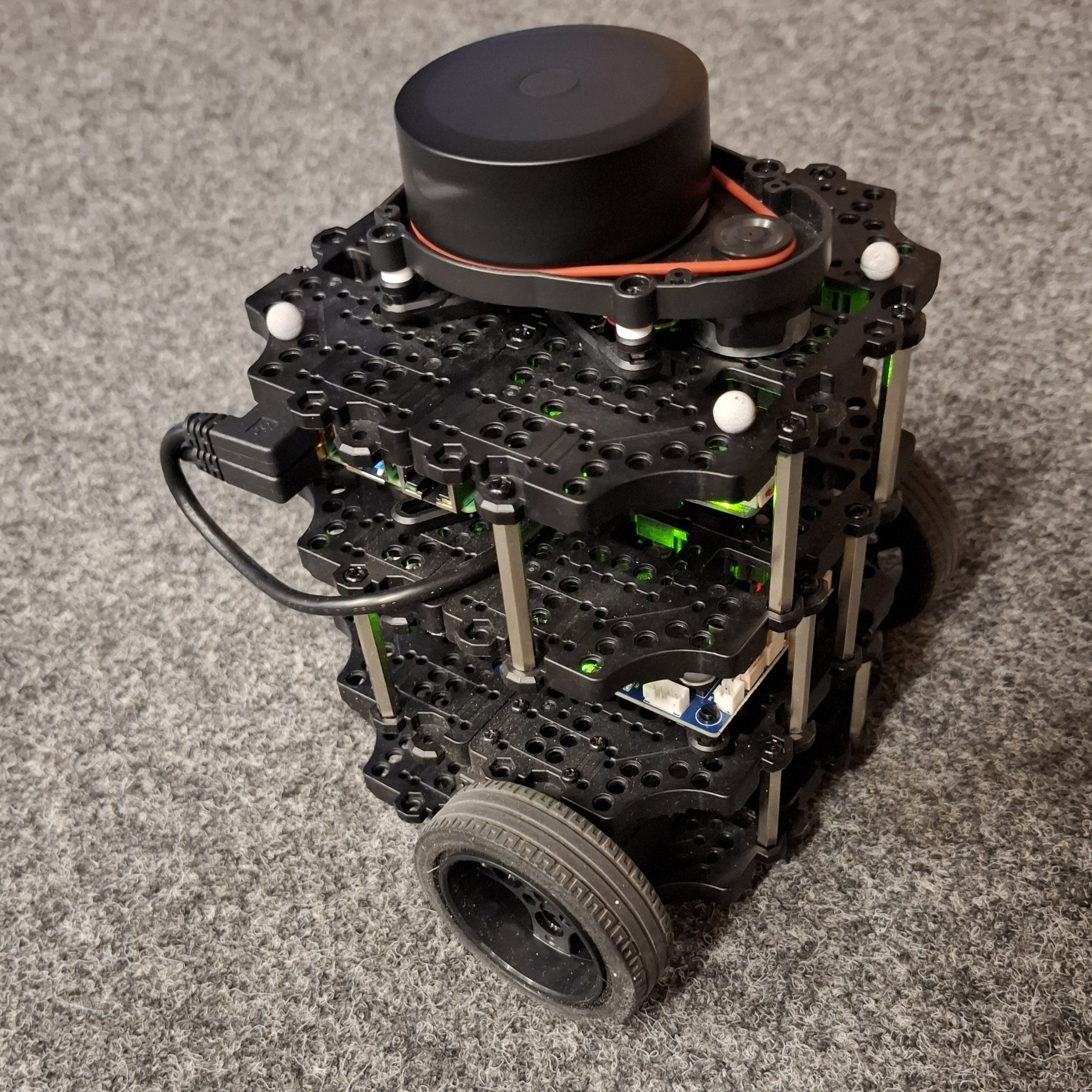} \qquad
    \includegraphics[width=0.35\linewidth]{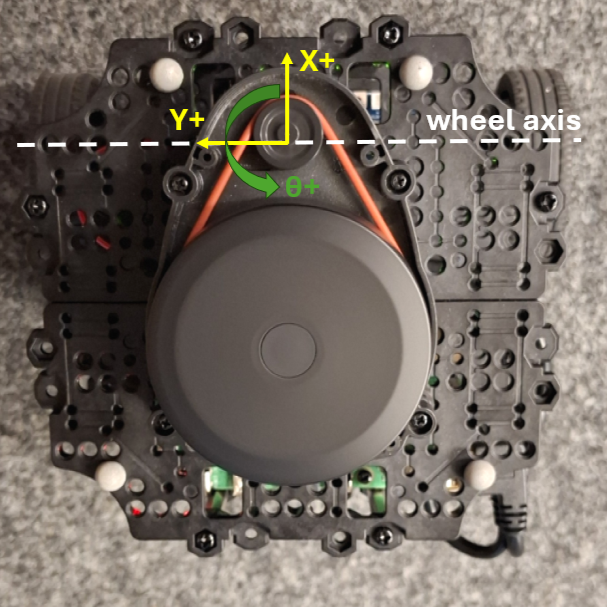}
    \caption{Left: TurtleBot3 Burger; Right: TB3 marker configuration}
    \label{fig:TB3}
\end{figure*}

\textbf{Software Architecture.}
The control logic for both simulation and real-world experiments is implemented as a single ROS 2 node using the \texttt{rclpy} Python library. In real-world experiments, the controller subscribes to the robot pose data provided by the OptiTrack system via the {\em NatNet} protocol, whereas in simulations it subscribes to the standard \texttt{/odom} ROS 2 topic published by {\em Gazebo}. Control commands are published on the standard \texttt{/cmd\_vel} topic, recognized by both the real-world TB3 and the Gazebo simulation. This allows us to use one and the same controller logic in different domains. \\
Although the physical dynamics of the robot are modeled in continuous time, a digital implementation of both controllers~\eqref{u_sampling} and~\eqref{u_cont} cannot be entirely continuous, as delays are introduced by data retrieval, computation, and data transmission. The control commands for the TB3 are generated on a PC and sent to the robot as quickly as possible to implement strategy~\eqref{u_cont}. Timestamp measurements indicate that a new control command is issued approximately every 0.5 ms, providing a high-frequency approximation of continuous-time behavior. Therefore, for notational convenience, we still refer to this implementation of controller~\eqref{u_cont}  as continuous-time in the sequel. \\
Experimental data and controller parameters are recorded automatically for each trial. Plots were generated using Python’s \texttt{matplotlib} library, and all data was exported in \texttt{.csv} format for further analysis.
\begin{figure}[t]
    \centering
    \includegraphics[width=0.9\linewidth]{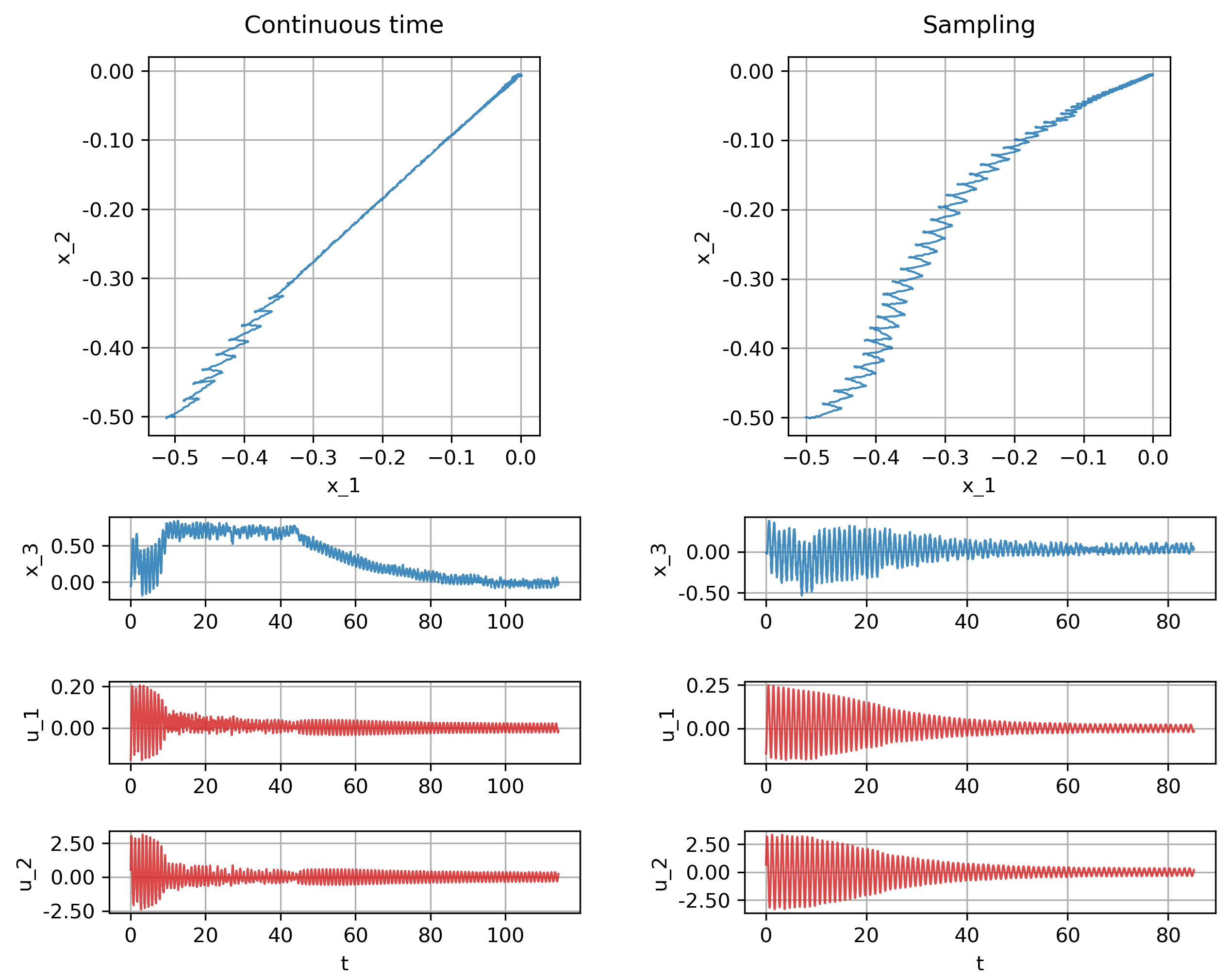}
    \caption{Experiment 1 -- Parameter set $(P1)$: continuous-time vs. sampling.}
    \label{fig:SAAP1_Params_A_real}
\end{figure}

\textbf{Simulation Environment.}
Simulation experiments are conducted using the predefined TB3 Gazebo environment provided in the Robotis e-Manual~\cite{robotis_TB3_gazebo}.
An empty world is used to replicate a simple, obstacle-free arena. The same TB3 model (Burger) as in real world is used to ensure consistency in dimensions, kinematics and dynamics. The model incorporates basic rigid body physics including link masses and joint properties allowing the robot to respond realistically to velocity commands although small inaccuracies in terms of friction and inertia compared to the real-world robot may exist. Due to the software architecture the same Python controller could be used.

\subsection{Experimental Results}
For all experiments, the initial state is set to  $x^0=(-0.5, -0.5, 0)^\top$, and the target state is $x^*=(0, 0, 0)^\top$. The robot stops automatically  if it is in the vicinity of that goal point to obtain comparable convergence time results. More precisely, an orientation angle tolerance of $\Delta x_3 = 0.05~rad$ was chosen for all parameter sets (see (\ref{pset:P1}), (\ref{pset:P2}), (\ref{pset:P3}), (\ref{pset:P4})). While for (P1) and (P2) we chose a target distance tolerance of $\sqrt{x_1^2 + x_2^2} \leq 0.005~m$, for (P3) and (P4) we chose a target distance tolerance of $\sqrt{x_1^2 + x_2^2} \leq 0.03~m$ due to longer convergence times.
 The translational and angular velocities are governed by the control law~\eqref{velControls},~\eqref{aPot} with the frequency parameter $\omega = 2\pi$ (i.e., $\varepsilon = 1$) and the gain $\gamma = 0.05$.
It is important to note that theoretical analyses typically assume sufficiently high control frequencies. However, the robot's physical constraints prevent operation at large frequency ranges.  So, to adjust control bounds to the admissible range $|u_1(t)|\le 0.22$, $|u_2(t)|\le 2.84$, we select relatively small values of $\omega$ and $\gamma$. Moreover, since larger admissible values are allowed for $u_2$, we choose $k_1 < k_2$. This choice avoids the need to further reduce $\omega$ and $\gamma$ while still satisfying the input constraints.\\
For the experimental validation, we use the Lyapunov function candidate~\eqref{V_alpha} and controls of the form~\eqref{velControls},~\eqref{aPot} with one of the following sets of parameters:
\noindent
\begin{minipage}{0.48\textwidth}
\begin{align}
\alpha &= 1,\; k_1 = 0.5,\; k_2 = 8, \tag{P1}\label{pset:P1}\\
\alpha &= 1,\; k_1 = \tfrac{1}{\sqrt{2}},\; k_2 = 4\sqrt{2}, \tag{P2}\label{pset:P2}
\end{align}
\end{minipage}
\hfill
\begin{minipage}{0.48\textwidth}
\begin{align}
\alpha &= 4,\; k_1 = 0.5,\; k_2 = 8, \tag{P3}\label{pset:P3}\\
\alpha &= 10,\; k_1 = 0.5,\; k_2 = 8. \tag{P4}\label{pset:P4}
\end{align}
\end{minipage}

In Experiments~1 and~2, we examine two different parameter selections $(P1)$ and $(P2)$ with the standard sum-of-squares Lyapunov function candidate $V_1(x)=x_1^2 + x_2^2 + x_3^2$.
By comparing the trajectories plots in these two experiment in~Figs.~2 and~3, we observe that the first choice of control coefficients $k_1=0.5$ and $k_2=8$ demonstrates a better  transient performance, so this choice is used in subsequent experiments.

We also compare two different approaches for defining the closed-loop solutions in each of the experiments. For the continuous-time approach~\eqref{u_cont}, the control amplitude vector $a(x)$ is recomputed at each new computation of the controls $u_1$ and $u_2$. For the sampling approach~\eqref{u_sampling}, the coefficients are only updated after a period of $\varepsilon=1$. \\
In Figs.~2--5, the units of $x_1$ and $x_2$ are $m$, $x_3$ is in $rad$,  the translational velocity control $u_1$ is in $m/s$,  and the angular velocity control $u_2$ is in $rad/s$.
Videos of all the experiments are available on YouTube~\cite{YTplaylist}.

\begin{figure}[t]
    \centering
    \includegraphics[width=1\linewidth]{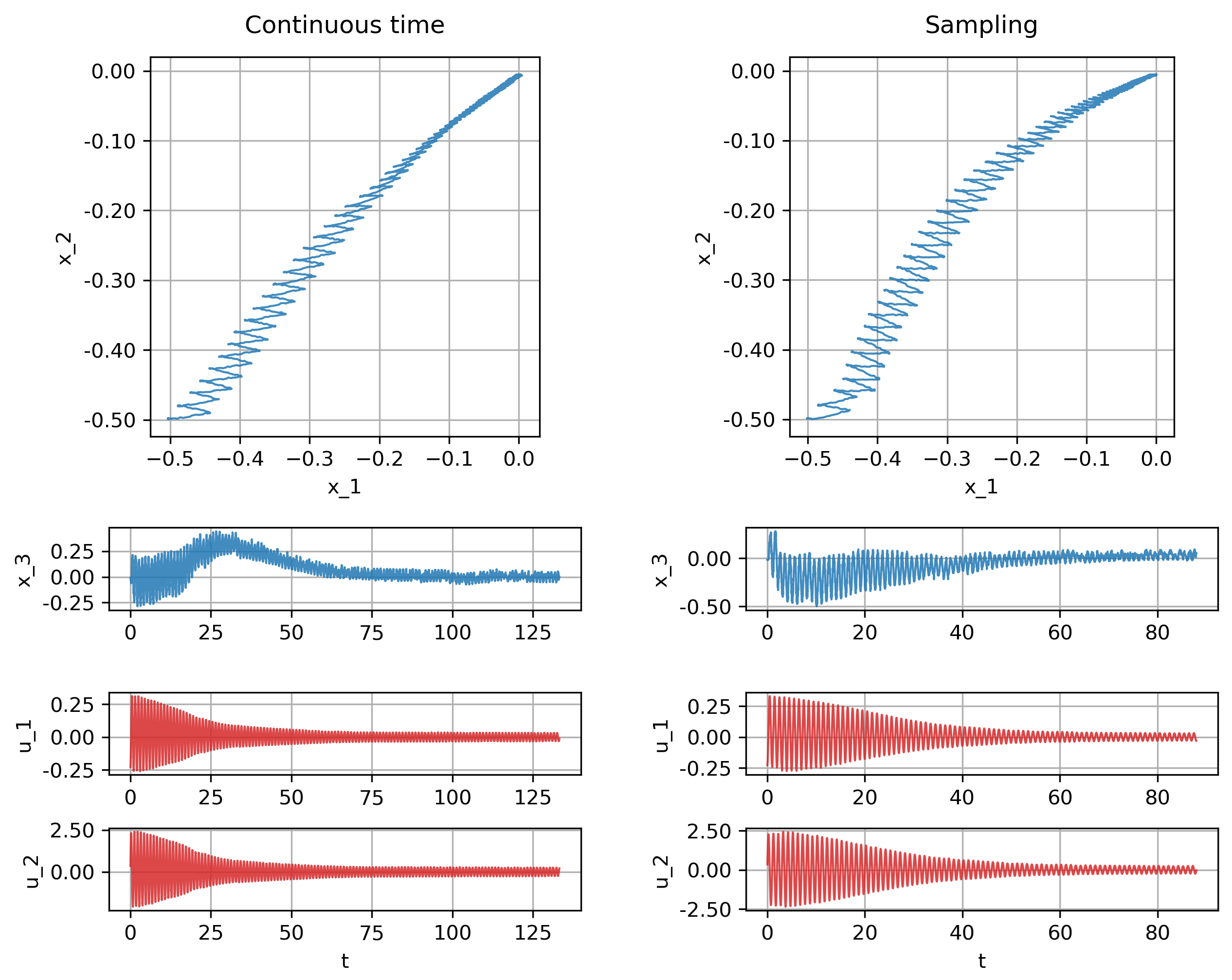}
  \caption{Experiment 2 -- Parameter set $(P2)$: continuous-time vs. sampling.}
    \label{fig:SAAP1_Params_B_real}
\end{figure}
Comparing the continuous-time and sampling approach, we can see that they generate clearly distinct trajectories. The continuous time approach follows a relatively direct and steep path which can especially be seen from the parameter set $(P1)$, where the robot moves straight $\pi/4$ to the goal without chattering from about $1/3$ of the distance. The sampling approach produces a more curved and more chattering but faster path.
When comparing the same approach across parameter sets $(P1)$ and $(P2)$, the overall trajectory shape is preserved, but parameter set $(P2)$ exhibits higher chattering for both approaches, thus $k_1$ and $k_2$ mostly influence chattering.
Across the parameter sets, the two approaches maintain their characteristic differences. Both continuous and sampling approaches have a higher convergence time for parameter set $(P2)$ due to the increased chattering whereas this is more visible for the continuous time approach as the robot was almost not moving in a straight line to the target.

\begin{figure}[t]
    \centering
    \includegraphics[width=1\linewidth]{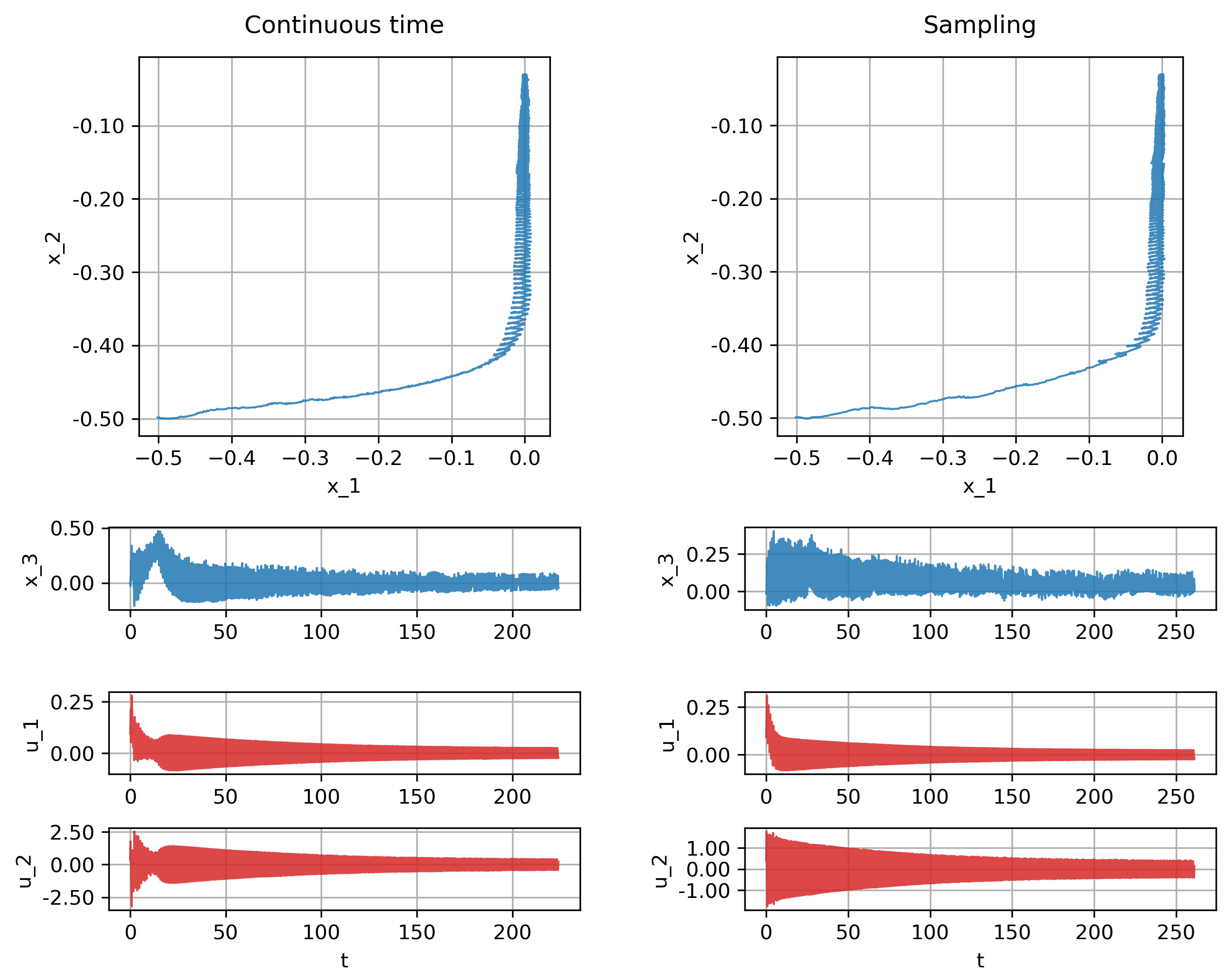}
    \caption{Experiment 3 -- Parameter set $(P3)$: continuous-time vs. sampling.}
    \label{fig:SAAP2_Params_A_real}
\end{figure}

\begin{figure}[h]
    \centering
    \includegraphics[width=1\linewidth]{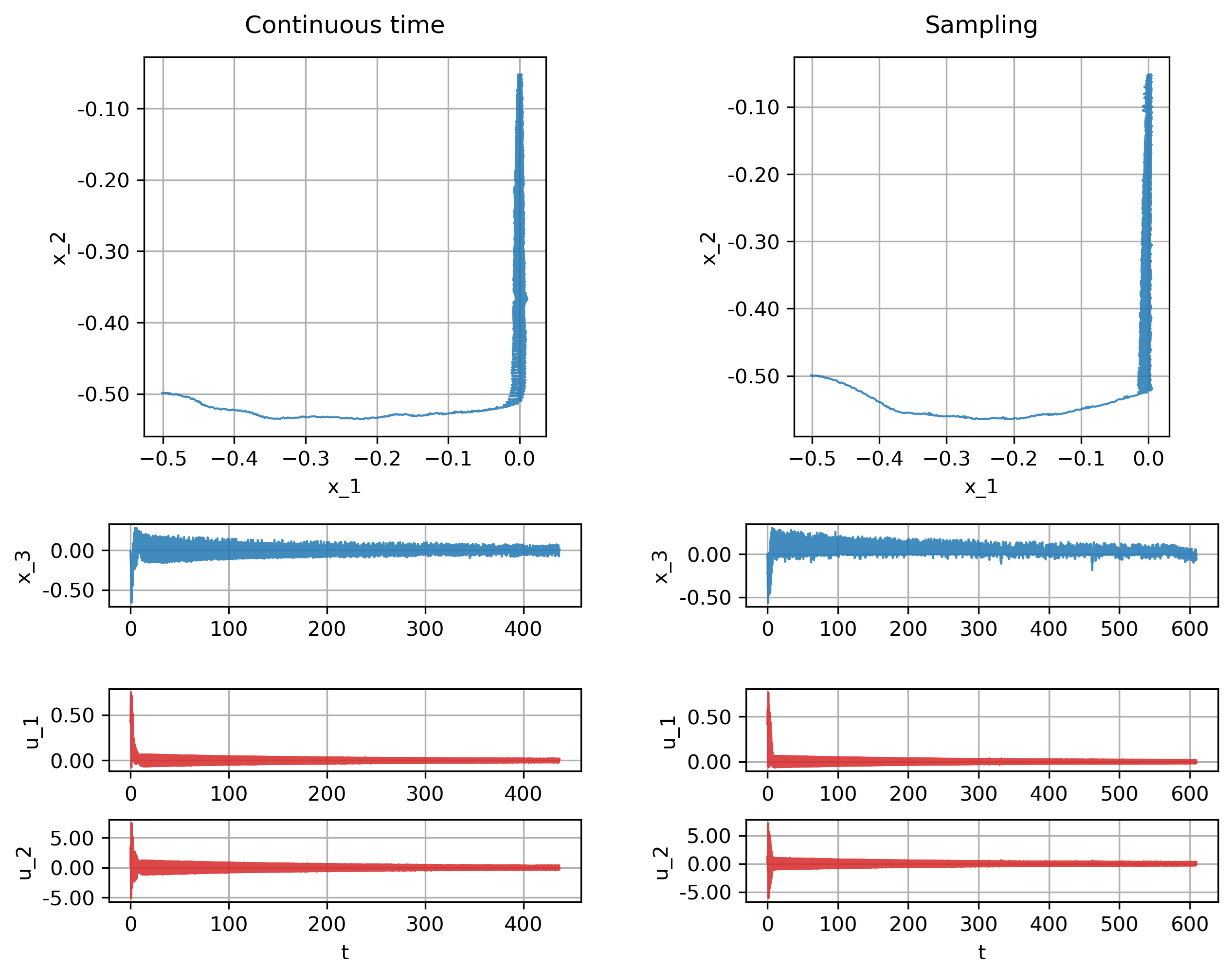}
   \caption{Experiment 4 -- Parameter set $(P4)$: continuous-time vs. sampling.}
    \label{fig:SAAP2_Params_B_real}
\end{figure}
Varying the $\alpha$-parameter in the Lyapunov function candidate~\eqref{V_alpha} drastically changed the shape of the robot's trajectory. In Figs. 4 and 5, we observe a very fast convergence in $x_1$, whereas the overall convergence time increased as the robot had then to move in the non-admissible direction $x_2$, i.e. sideways. Across both parameter sets $(P3)$ and $(P4)$, the sampling approach is slower than the continuous-time one.

We observe in Figs. 2--5 that the control signals $u_1$ and $u_2$, evaluated along the presented trajectories, perfectly satisfy the control constraints specified above, and the transient behavior is acceptable for practical applications.

\section{Conclusions and Future Work}

The main contribution of this work is twofold.
First, the possibility of practical application of the gradient-based controllers for tracking a real mobile robot toward a prescribed target position is justified. It is remarkable to note that although the theoretical stability proof is valid for sufficiently small values of the parameter $\varepsilon$ (and therefore for sufficiently large control frequency $\omega$), our experiments demonstrate the applicability of these controllers at moderate frequencies with $\varepsilon=1$.
The choice of control coefficients $k_1$ and $k_2$
 is justified in the present work based on control constraints for differential-drive robots combined with gradient-based controls under a realistic Lyapunov function candidate.

Second, through the formulation of a suitable optimization problem, an appropriate class of Lyapunov candidates has been studied. The construction of the first terms of a minimizing sequence demonstrates the potential of this approach to reduce chattering behavior and thus minimize zigzagging in the closed-loop dynamics. These observations are well confirmed in Experiments~3 and~4, where the initial stage of the closed-loop trajectory exhibits significantly more regular behavior compared to the strongly zigzagging evolution obtained with a standard sum-of-squares Lyapunov function candidate.

A further research direction could focus on studying the well-posedness of the associated optimization problems and on developing efficient computational schemes for constructing Lyapunov function candidates. We leave this open theoretical question for future investigation.
Another promising direction concerns experimental research aimed at achieving guaranteed-time practical stabilization to reach a prescribed neighborhood of the target point, possibly exploiting non-differentiable Lyapunov function candidates.

%
% ---- Bibliography ----
%
\bibliographystyle{spmpsci}

\begin{thebibliography}{10}
\providecommand{\url}[1]{{#1}}
\providecommand{\urlprefix}{URL }
\expandafter\ifx\csname urlstyle\endcsname\relax
  \providecommand{\doi}[1]{DOI~\discretionary{}{}{}#1}\else
  \providecommand{\doi}{DOI~\discretionary{}{}{}\begingroup
  \urlstyle{rm}\Url}\fi

\bibitem{robotis_TB3_overview}
\url{https://emanual.robotis.com/docs/en/platform/turtlebot3/overview/}

\bibitem{robotis_TB3_gazebo}
\url{https://emanual.robotis.com/docs/en/platform/turtlebot3/simulation/#gazebo-simulation}

\bibitem{YTplaylist}
\url{https://www.youtube.com/playlist?list=PLzONPJl2XQ2NXeSbX9ORtu-gSslBxUgxk}

\bibitem{Artstein}
Artstein, Z.: Stabilization with relaxed controls.
\newblock Nonlinear Analysis: Theory, Methods \& Applications \textbf{7}(11),
  1163--1173 (1983)

\bibitem{Bloch15}
Bloch, A.M.: Nonholonomic mechanics and control, vol.~24.
\newblock Springer (2015)

\bibitem{Bro83}
Brockett, R.W.: Asymptotic stability and feedback stabilization.
\newblock Differential Geometric Control Theory pp. 181--191 (1983)

\bibitem{ebrahimi2026impact}
Ebrahimi, A., Azimi, A., Aghdam, M.M.: Impact of numerical integrators on the
  stabilization of holonomic and nonholonomic constraints in the baumgarte
  method.
\newblock Engineering with Computers \textbf{42}(1), 40 (2026)

\bibitem{gao2024fas}
Gao, Y., Zhang, Z., Huang, P., Wu, Y.: Fas-based anti-disturbance stabilization
  control of nonholonomic systems: theory and experiment.
\newblock IEEE Transactions on Automation Science and Engineering  (2024)

\bibitem{GZ18}
Grushkovskaya, V., Zuyev, A.: Obstacle avoidance problem for second degree
  nonholonomic systems.
\newblock In: Proc. 57th IEEE Conf. on Decision and Control, pp. 1500--1505
  (2018)

\bibitem{GZ23}
Grushkovskaya, V., Zuyev, A.: Motion planning and stabilization of nonholonomic
  systems using gradient flow approximations.
\newblock Nonlinear Dynamics \textbf{111}(23), 21,647--21,671 (2023)

\bibitem{miao2025study}
Miao, X., Liu, C.: Study on bicycle stability and self-stabilization mechanism
  by dynamic-and-data-driven surrogate modeling: X. miao, c. liu.
\newblock Nonlinear Dynamics pp. 1--23 (2025)

\bibitem{mondal2020stability}
Mondal, K., Wallace, B., Rodriguez, A.A.: Stability versus maneuverability of
  non-holonomic differential drive mobile robot: Focus on aggressive position
  control applications.
\newblock In: Proc. IEEE Conf. Control Technol. Appl., pp. 388--395 (2020)

\bibitem{pazderski2017waypoint}
Pazderski, D.: Waypoint following for differentially driven wheeled robots with
  limited velocity perturbations: asymptotic and practical stabilization using
  transverse function approach.
\newblock Journal of Intelligent \& Robotic Systems \textbf{85}(3), 553--575
  (2017)

\bibitem{wang2023formation}
Wang, J., Dong, H., Chen, F., Vu, M.T., Shakibjoo, A.D., Mohammadzadeh, A.:
  Formation control of non-holonomic mobile robots: Predictive data-driven
  fuzzy compensator.
\newblock Mathematics \textbf{11}(8), 1804 (2023)

\bibitem{wu2025evolutionary}
Wu, J., Cheng, H., Tian, K., Li, P.: Evolutionary computing control strategy of
  nonholonomic robots with ordinary differential equation kinematics model.
\newblock Electronics \textbf{14}(3), 601 (2025)

\bibitem{zhang2026prescribed}
Zhang, J., Liu, Y., Zheng, Z., Niu, B.: Prescribed-time control for
  nonholonomic systems: a fully actuated systems method.
\newblock Int. J. Syst. Sci. pp. 1--13 (2026)

\bibitem{zhang2022new}
Zhang, Z., Zhang, S., Wu, Y.: New stabilization controller of state-constrained
  nonholonomic systems with disturbances: Theory and experiment.
\newblock IEEE Transactions on Industrial Electronics \textbf{70}(1), 669--677
  (2022)
  
\bibitem{Zu99}
Zuiev, A.: On {B}rockett's condition for smooth stabilization with respect to a
  part of the variables.
\newblock In: 1999 European Control Conference (ECC), pp. 1729--1732. IEEE
  (1999)

\bibitem{ZuSIAM}
Zuyev, A.: Exponential stabilization of nonholonomic systems by means of
  oscillating controls.
\newblock SIAM J. Control Optim. \textbf{54}(3), 1678--1696 (2016)

\bibitem{ZG_CDC25}
Zuyev, A., Grushkovskaya, V.: On classical solutions in the stabilization
  problem for nonholonomic control systems with time-varying feedback laws.
\newblock In: Proc. IEEE 64th Conf. Decis. Control, pp. 7507--7512 (2025)

\end{thebibliography}

\end{document}